\numberwithin{equation}{section}
\numberwithin{figure}{section}
\newcommand{\be}{\begin{equation}}
\newcommand{\ee}{\end{equation}}
\newcommand{\beq}{\begin{eqnarray}}
\newcommand{\eeq}{\end{eqnarray}}
\newcommand{\bea}{\begin{array}{llll}}
\newcommand{\eea}{\end{array}}
\newcommand{\bi}{\begin{itemize}}
\newcommand{\ei}{\end{itemize}}
\newcommand{\bn}{\begin{enumerate}}
\newcommand{\en}{\end{enumerate}}
\newcommand{\bl}{\begin{align}}
\newcommand{\el}{\end{align}}
\newtheorem{theoreme}{Theorem}[section]
\newtheorem{lemme}{Lemma}[section]
\newtheorem{corollaire}{Corollary}[section]
\newtheorem{definition}{Definition}[section]
\newcommand{\R}{\mathbb{R}}
\newcommand{\Z}{\mathbb{Z}}
\def\G{\Gamma}
\def\RR{\rm \hbox{I\kern-.2em\hbox{R}}}
\def\NN{\rm \hbox{I\kern-.2em\hbox{N}}}
\def\ZZ{\rm {{\rm I}\kern-.28em{\rm Z}}}
\def\KK{\rm {{\rm I}\kern-.28em{\rm K}}}
\def\EE{\rm {{\rm I}\kern-.28em{\rm E}}}
\def\<{\langle}
\def\>{\rangle}
\def\e{\varepsilon}
\def\g{\gamma}
\def\w{\omega}
\def\({\Bigl (}
\def\){\Bigr )}
\def\ra{\longrightarrow}
\def\ra{\rightarrow}
\begin{document}

\title{ Stable sampling  and Fourier multipliers}  

\author{Basarab Matei, Yves Meyer and Joaquim Ortega-Cerd\`a}     

\address{LAGA, Université Paris Nord, 99, avenue Jean-Baptiste Clément, 93430
Villetaneuse, France}
\address{CMLA, ENS-Cachan, 61, avenue du Président-Wilson, 94235 Cachan cedex,
France}
\address{Dpt. MAIA, Universitat  de Barcelona, Gran Via 585, 08007
Bar\-ce\-lo\-na, Spain}

\email{matei@math.univ-paris13.fr, yves.meyer@cmla.ens-cachan.fr, 
jortega@ub.edu}

\date{\today} 

\thanks{The third author is supported by the Generalitat de Catalunya (grant
2009 SGR 1303) and the Spanish Ministerio de Econom\'{\i}a y Competividad
(project MTM2011-27932-C02-01). Part of this work was done while he was
staying at the Center for Advance Studies in Oslo, and he would like to express
his gratitude to the institute for the hospitality}

\begin{abstract}
 We study the relationship between stable sampling sequences for bandlimited
functions in $L^p(\R^n)$ and the Fourier multipliers in $L^p$. In the case that
the sequence is a lattice and the spectrum is a fundamental domain for the
lattice the connection is complete. In the case of irregular sequences there is
still a partial relationship. 
\end{abstract}

\maketitle

\section{Introduction}\label{sect0}
When $\w>0,\,1<p<\infty,$  and $f\in L^p(\R)$ we write $f\in E_\w^p$ if  the
Fourier  transform  ${\hat f}$ of $f$ vanishes outside $[-\w,\w]$. If  $0<h \leq
\pi/\w$ the Shannon theorem states that any $f\in E_\w^p,$ can be completely
recovered from its samples $f(kh),\, k\in \Z$.  When $h=\pi/\w,$ the map
$S_{\w}\,:f\mapsto \sqrt{h}f(kh),\,k\in \Z,$ is an  isometry between $E_\w^2$
and $\ell^2(\Z)$.  This map $S_{\w}$ is an isomorphism between $E_\w^p$ and
$\ell^p(\Z)$ when $h=\pi/\w$ and $1<p<\infty$. This fails if $p=1$ or
$p=\infty$. 
 \smallskip

The problem addressed in this paper is to extend the Shannon theorem to
functions $f\in L^p(\R^n), n>1,$  $1< p< \infty$. We are given an  
integrable compact set  $K\subset\R^n$  and we want to know if every $f\in
L^p(\R^n)$ whose Fourier transform is supported by $K$ can be recovered from its
samples on the grid $\Gamma=\Z^n$. The general case where $\Z^n$ is replaced by
an arbitrary lattice $\Gamma$ will follow by a linear change of variables.

\section{Stable sampling and stable interpolation}
 
Let  $K\subset \R^n$ be a compact set and $p\in (1, \infty)$. Let $E^p_K$ be 
the closed subspace of $L^p(\R^n)$ consisting of all $f\in L^p(\R^n)$ whose
Fourier transform ${\hat f}$ is supported by $K$.
\begin{equation}
E^p_K= \{f \in  L^p(\R^n) : \mbox{supp} {\hat f}\subset   K\}
\end{equation}
The Fourier transform ${\hat f}$ of  $f$ is
defined by 
\begin{equation}
{\hat f}(\xi)=\int_{\R^n} \exp( - i x \cdot \xi )f(x)dx,\,\,\,
\xi\in\R^n,
\end{equation}
where $x\cdot \xi = x_1\xi_1+\cdots+x_n\xi_n$. When $1\leq p \leq 2$ the Fourier
transform ${\hat f}$ belongs to $L^q$ with $1/p+1/q=1$. If $p>2$ this Fourier
transform is defined in the distributional sense.  The following lemmata are
well known. 
 
\begin{lemme}\label{Plancherel}
If $K\subset \R^n$ is a compact set and $1\leq p \leq \infty$ the  restriction
operator $S^p$ which is defined by
\begin{equation}
f \mapsto (f(k))_{k\in \Z^n}
\end{equation}
is continuous form  $E^p_K$ to $\ell^p(\Z^n)$.
\end{lemme}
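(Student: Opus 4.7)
The plan is to realize the samples $f(k)$ via a reproducing kernel identity and then sum a pointwise $L^p$-mean inequality over the lattice. Since $K$ is compact I would fix a Schwartz function $\phi \in \mathcal{S}(\R^n)$ whose Fourier transform $\hat\phi$ lies in $C^\infty_c(\R^n)$ and equals $1$ on a neighbourhood of $K$. For any $f \in E^p_K$ with $1 \le p \le \infty$, the convolution $f * \phi$ is well defined by Young's inequality, and since $\widehat{f * \phi} = \hat f\,\hat\phi = \hat f$ as tempered distributions while both $f$ and $f * \phi$ are continuous (the former by Paley--Wiener--Schwartz, the latter as a convolution of an $L^p$ function with a Schwartz kernel), we obtain the pointwise identity $f = f * \phi$ on $\R^n$. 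In particular every sample $f(k)$ is well defined.

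For $1 \le p < \infty$ I would then derive a pointwise bound by Jensen's inequality applied to the probability measure $|\phi(x-y)|\,dy / \|\phi\|_1$:
\begin{equation*}
|f(x)|^p = \Bigl| \int_{\R^n} f(y)\, \phi(x - y)\, dy \Bigr|^p \le \|\phi\|_1^{p-1} \int_{\R^n} |f(y)|^p\, |\phi(x - y)|\, dy.
\end{equation*}
Summing this inequality at $x = k$ over $k \in \Z^n$ and interchanging sum and integral via Fubini--Tonelli yields
\begin{equation*}
\sum_{k \in \Z^n} |f(k)|^p \le \|\phi\|_1^{p-1} \int_{\R^n} |f(y)|^p \Bigl( \sum_{k \in \Z^n} |\phi(k - y)| \Bigr) dy \le \|\phi\|_1^{p-1}\, M\, \|f\|_p^p,
\end{equation*}
where $M := \sup_{y \in \R^n} \sum_{k \in \Z^n} |\phi(k - y)|$ is finite thanks to the Schwartz decay of $\phi$ (which dominates the sum by a convergent Riemann-type series). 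The case $p = \infty$ is immediate once $f$ is known to be continuous: $\sup_k |f(k)| \le \|f\|_\infty$.

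No serious obstacle is anticipated; the argument is classical. The only point requiring a moment of care is the pointwise identity $f = f * \phi$ when $p > 2$, where $\hat f$ is only a tempered distribution with compact support in $K$. One circumvents this by invoking Paley--Wiener--Schwartz to put $f$ and $f * \phi$ in the common class of continuous functions, after which Fourier uniqueness promotes the distributional identity $\widehat{f * \phi} = \hat f$ to pointwise equality and closes the argument.
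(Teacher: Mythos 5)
Your proof is correct: the reproducing identity $f=f*\phi$ with $\hat\phi\equiv 1$ near $K$, followed by Jensen's inequality and the uniform bound on $\sum_{k}|\phi(k-y)|$ from Schwartz decay, is the standard argument for the Plancherel--P\'olya inequality, and the $p>2$ subtlety is handled properly via Paley--Wiener--Schwartz. The paper gives no proof at all, simply citing this as the well-known Plancherel--P\'olya inequality, so your write-up supplies exactly the classical argument the authors had in mind.
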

The restriction of $f$ to any uniformly separated sequence belongs to $\ell^p$.
This the well-known Plancherel-Polya inequality.
\begin{lemme}[Poisson formula]\label{Poisson}
If $F$ is a compactly supported distribution and if $G(x)=\sum_{k\in
\Z^n}F(x-2\pi k)$ then the Fourier coefficients $c(k),\,k\in \Z^n,$ of $G$ and
the Fourier transform ${\hat F}$ of $F$ are related by 
\begin{equation}
c(k)=(2\pi)^{-n}{\hat F}(k),\,k\in\Z^n.
\end{equation}
\end{lemme}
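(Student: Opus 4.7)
My plan is to reduce everything to the defining duality of a periodic distribution and then exploit the compact support of $F$ to justify the formal manipulations.

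First, I would verify that the series $G(x)=\sum_{k\in\Z^n} F(x-2\pi k)$ makes sense as a distribution. Since $F$ has compact support, for any bounded open set $U$ only finitely many translates $F(\cdot - 2\pi k)$ have support meeting $\overline{U}$. Hence the sum is locally finite and defines a distribution on $\R^n$, manifestly $2\pi\Z^n$-periodic.

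Next, I would recall how to define Fourier coefficients of a $2\pi\Z^n$-periodic distribution $G$. Choose $\chi\in C_c^\infty(\R^n)$ such that $\sum_{j\in\Z^n}\chi(x-2\pi j)=1$ for every $x\in\R^n$ (a smooth partition of unity subordinate to a cover of $\R^n$ by translates of the torus). Then $c(k):= (2\pi)^{-n}\langle G,\chi(x)\,e^{-ik\cdot x}\rangle$ is well defined, independent of the choice of $\chi$, and coincides with the usual Fourier coefficient for smooth $G$.

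The core of the computation is then:
\begin{align*}
(2\pi)^n c(k)
&=\langle G,\chi(x)\,e^{-ik\cdot x}\rangle
 =\sum_{j\in\Z^n}\langle F(\cdot-2\pi j),\chi(x)\,e^{-ik\cdot x}\rangle\\
&=\sum_{j\in\Z^n}\langle F(y),\chi(y+2\pi j)\,e^{-ik\cdot(y+2\pi j)}\rangle
 =\sum_{j\in\Z^n}\langle F(y),\chi(y+2\pi j)\,e^{-ik\cdot y}\rangle,
\end{align*}
where I used the translation identity for distributions and the fact that $e^{-ik\cdot 2\pi j}=1$ for $k,j\in\Z^n$. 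Since $F$ is compactly supported only finitely many $\chi(\cdot+2\pi j)$ contribute on $\mathrm{supp}\,F$, so I may exchange sum and pairing and use $\sum_j\chi(y+2\pi j)=1$ to collapse the sum to $\langle F(y),e^{-ik\cdot y}\rangle=\hat F(k)$.

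The only delicate point is the legitimacy of these distributional manipulations, but the compact support of $F$ turns every infinite sum encountered into a finite one in a neighbourhood of $\mathrm{supp}\,F$, so all exchanges are justified. The result $c(k)=(2\pi)^{-n}\hat F(k)$ follows.
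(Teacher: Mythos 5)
Your argument is correct and complete: the local finiteness of the periodized sum, the partition-of-unity definition of the Fourier coefficients of a periodic distribution, and the collapse of the sum over translates using $\sum_j\chi(y+2\pi j)=1$ together with $e^{-2\pi ik\cdot j}=1$ are exactly the standard route to this identity. The paper offers no proof at all — it states the lemma as well known — so there is nothing to compare against; your write-up would serve as a valid justification, and the one point worth a sentence more of care (that $\langle F, e^{-ik\cdot y}\rangle$ is the meaning of $\hat F(k)$ for a compactly supported distribution, via its canonical extension to $C^\infty$ test functions) is routine.
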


If $1 < p <\infty$ we denote by ${\mathcal F}L^p$ the Banach space of all
Fourier transforms of functions in $L^p(\R^n)$. The norm of ${\hat f}$ in
${\mathcal F}L^p$ is the norm of $f$ in $L^p$. The space of restrictions of
${\mathcal F}L^p$ to an open set  $\Omega$ will be denoted by ${\mathcal
F}L^p(\Omega)$. Similarly ${\mathcal F}\ell^p$ will denote the Banach space
consisting of all $2\pi \Z^n$ periodic functions (or distributions) whose
Fourier coefficients belong to $\ell^p(\Z^n)$ and its norm is the $\ell^p$ norm
of its coefficients.

\begin{lemme}\label{product}
For $F\in {\mathcal F}\ell^p$ and for $\phi$ in the Schwartz class  ${\mathcal
S}$ the product $\phi F$ belongs to ${\mathcal F}L^p$
\end{lemme}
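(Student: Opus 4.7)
The plan is to write $\phi F$ as the Fourier transform of an explicit $L^p$ function obtained by modulating $\phi$ and taking an inverse Fourier transform term by term.

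First I would pick $g\in\mathcal S$ with $\hat g=\phi$, and expand $F$ as a Fourier series $F(\xi)=\sum_{k\in\Z^n}c(k)e^{ik\cdot\xi}$ with $(c(k))\in\ell^p(\Z^n)$. Since translation becomes modulation under the Fourier transform, the function $\xi\mapsto e^{ik\cdot\xi}\phi(\xi)$ is the Fourier transform of the translate $g(\cdot +k)$. So the natural candidate for an inverse Fourier transform of $\phi F$ is
\begin{equation}
h(x):=\sum_{k\in\Z^n}c(k)\,g(x+k).
\end{equation}
The whole proof reduces to showing $h\in L^p(\R^n)$, because once that is established the continuity of the Fourier transform on $\mathcal S'$ together with absolute convergence of the series $\sum_k c(k)e^{ik\cdot\xi}\phi(\xi)$ in $\mathcal S'$ (testing against $\mathcal S$ functions, using the Schwartz decay of $\phi$ against the tempered $F$) identifies $\hat h$ with $\phi F$.

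To bound $\|h\|_{L^p}$, I would split $\R^n$ into the unit cubes $Q_j=j+[0,1]^n$, $j\in\Z^n$, and set
\begin{equation}
a_m:=\|g\|_{L^p(Q_m)},\qquad m\in\Z^n.
\end{equation}
Because $g\in\mathcal S$, the sequence $(a_m)$ decays faster than any polynomial, hence $(a_m)\in\ell^1(\Z^n)$. Applying Minkowski's inequality on $Q_j$,
\begin{equation}
\|h\|_{L^p(Q_j)}\le\sum_{k}|c(k)|\,\|g(\cdot+k)\|_{L^p(Q_j)}=\sum_{k}|c(k)|\,a_{j+k},
\end{equation}
which is a discrete convolution of $|c|\in\ell^p$ and the reflection of $a\in\ell^1$.

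Raising to the $p$-th power and summing in $j$ gives
\begin{equation}
\|h\|_{L^p(\R^n)}^p=\sum_j\|h\|_{L^p(Q_j)}^p\le\sum_j\Bigl(\sum_k|c(k)|\,a_{j+k}\Bigr)^p\le\|c\|_{\ell^p}^p\,\|a\|_{\ell^1}^p
\end{equation}
by Young's convolution inequality on $\Z^n$. Thus $h\in L^p(\R^n)$, the series defining $h$ converges in $L^p$, and $\phi F=\hat h\in\mathcal F L^p$. The main (mild) obstacle is purely bookkeeping: justifying that the distributional product $\phi F$ coincides with the Fourier transform of the $L^p$ function $h$ constructed term by term, which is handled by the absolute convergence of $\sum_k c(k)g(\cdot+k)$ in $L^p$ established above.
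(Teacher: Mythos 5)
Your proof is correct and follows essentially the same route as the paper: both expand $F$ in its Fourier series, identify the candidate preimage $u(x)=\sum_{k\in\Z^n}c(k)\,\theta(x\mp k)$ (with $\theta$ the Fourier transform of $\phi$, the sign depending only on the convention), and conclude by showing this sum of rapidly decaying translates lies in $L^p$. The only divergence is the final norm estimate --- the paper applies a pointwise H\"older inequality with weights $|\theta(x-k)|$ together with the uniform bound on $\sum_k|\theta(x-k)|$, whereas you decompose $\R^n$ into unit cubes and use Minkowski plus Young's inequality $\ell^p\ast\ell^1\subset\ell^p$ --- and both are standard and equally valid; your explicit justification that $\widehat h=\phi F$ in $\mathcal S'$ is in fact more careful than the paper, which simply asserts the identification.
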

Indeed let $F(x)=\sum_{k\in \Z^n}c(k)\exp(i k\cdot x)$ where $c(k)\in \ell^p$
and let $\theta$ be the Fourier transform of $\phi$. Then the Fourier transform
of $\phi F$ is $u(x)=\sum_{k\in \Z^n}c(k)\theta(x-k)$. By the Holder inequality
\[
 |u(x)|^p\leq \Bigl( \sum_{k\in \Z^n} |c(k)|^p
\theta(x-k)\Bigr)\Bigl(\sum_{k}\theta(x-k)\Bigr)^{p-1}.
\]
Thus it follows that $\|u\|_p^p\le C \sum_{k\in\Z^n} |c(k)|^p$.

 \medskip
 
From now on the compact set $K$ is assumed to be {\it regular}.
\begin{definition} 
A compact set $K\subset {\mathbb R}^n$ is {\it regular} if the following two
conditions hold 
\begin{itemize}
\item[(a)] $K$ is connected
\item[(b)] for each $x_0$ belonging to the boundary $\Gamma=\partial K$ of $K$
there exist a neighborhood $V$ of $x_0$, a suitable coordinate system ${\mathcal
R}_{x_0}$ and a continuous function $A_{x_0}\,: {\mathbb R}^{n-1}\mapsto
{\mathbb R}$ such that $\Gamma$ coincides on $V$ with  the graph of $A_{x_0}$ in
${\mathcal R}_{x_0}$ and $K$ coincides on $V$ with $\{x_n\geq A_{x_0}(x'),\,
x'=(x_1,\ldots, x_{n-1})\}.$ 
\end{itemize}
\end{definition}

A Lipschitz domain is regular. Definition 2.1 is required in this note since
the counter example which is given below is not a Lipschitz domain.   

We denote by ${\mathcal S}={\mathcal S}(\R^n)$ the Schwartz class. We then have.
\begin{lemme}~\label{densitat}
If $1<p <\infty$ and if $K$ is regular, then  ${\mathcal S}\cap E^p_K$ is dense
in $E^p_K$.
\end{lemme}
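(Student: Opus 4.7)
The plan is to approximate $f \in E^p_K$ in two stages: first approximate by $L^p$-functions whose Fourier transform is supported in a compact subset of the interior of $K$, then approximate such functions by Schwartz functions in $E^p_K$. The regularity hypothesis is used only in the first stage.

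For Stage 1, I would exploit the graph structure of $\partial K$. By property (b), for each $x_0 \in \partial K$ there is a neighborhood $W_{x_0}$ in which, in suitable local coordinates, $K = \{x_n \geq A_{x_0}(x')\}$. Crucially, translating a compact piece of $K \cap W_{x_0}$ by a small positive multiple of the local $e_n$ moves it strictly into $\text{int}(K)$, because $x_n \geq A(x')$ becomes $x_n + \eta > A(x')$. By compactness of $\partial K$, I extract a finite subcover $W_1,\ldots,W_N$ of $\partial K$ with associated inward directions $v_1,\ldots,v_N$, shrink each to $V_j$ with $\overline{V_j} \subset W_j$, and add an open set $V_0$ with $\overline{V_0} \subset \text{int}(K)$ covering $K \setminus \bigcup V_j$. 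Pick a smooth partition of unity $\{\chi_j\}_{j=0}^N$, $\chi_j \in C_c^\infty$, $\text{supp}(\chi_j) \subset V_j$, $\sum \chi_j = 1$ on a neighborhood of $K$.

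Set $f_j := f * \check{\chi}_j \in L^p$, where $\check{\chi}_j \in \mathcal{S}$ is the inverse Fourier transform of $\chi_j$; then $\hat{f}_j = \chi_j \hat{f}$. Since $\sum\chi_j = 1$ on a neighborhood of $\text{supp}(\hat{f}) = K$, we have $\sum f_j = f$. For $j \geq 1$ I modulate: $g_{j,\eta}(x) := e^{i\eta v_j \cdot x} f_j(x)$, whose Fourier support is $\text{supp}(\chi_j \hat{f}) + \eta v_j \subset (V_j \cap K) + \eta v_j$. By the graph argument above, this set lies in $\text{int}(K)$ for all sufficiently small $\eta > 0$. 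Defining $g_\eta := f_0 + \sum_{j \geq 1} g_{j,\eta}$, the Fourier support of $g_\eta$ is a compact subset $K_\eta$ of $\text{int}(K)$, and
\[
\|g_\eta - f\|_p \leq \sum_{j=1}^N \|(e^{i\eta v_j \cdot x} - 1) f_j\|_p \longrightarrow 0
\]
by dominated convergence, since each $f_j \in L^p$.

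For Stage 2, given $g \in L^p$ with $\hat{g}$ supported in a compact $K' \subset \text{int}(K)$, choose $\psi \in C_c^\infty(\text{int}(K))$ with $\psi = 1$ on $K'$. Then $\check{\psi} \in \mathcal{S}$ and $g = g * \check{\psi}$. Let $\phi \in C_c^\infty(\mathbb{R}^n)$ with $\phi(0) = 1$, set $\phi_R(x) := \phi(x/R)$, and define $h_R := (g\phi_R) * \check{\psi}$. Since $g\phi_R$ has compact spatial support and $\check{\psi} \in \mathcal{S}$, the convolution $h_R$ is Schwartz; its Fourier transform $\widehat{g\phi_R} \cdot \psi$ is supported in $\text{supp}(\psi) \subset K$, so $h_R \in \mathcal{S} \cap E^p_K$. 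Young's inequality gives $\|h_R - g\|_p \leq \|\check{\psi}\|_1 \|g(\phi_R - 1)\|_p \to 0$ as $R \to \infty$. Combining Stages 1 and 2 yields the desired density.

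The main obstacle is Stage 1: constructing an $L^p$-approximation whose Fourier support is a compact subset of $\text{int}(K)$. The direct moves (spatial mollification, spatial truncation) either enlarge the Fourier support or fail to produce Schwartz decay. The graph hypothesis (b) is exactly what is needed to supply, locally near each boundary point, a consistent direction along which modulation of $f$ pushes the Fourier support strictly inside $K$.
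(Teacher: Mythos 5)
Your proof is correct and follows essentially the same route as the paper: a partition of unity localizes $\hat f$ near the boundary, and the graph description of $\partial K$ supplies a direction in which a small Fourier translation (modulation in space) pushes each piece's spectrum strictly into the interior, with the error controlled by dominated convergence. The only (immaterial) difference is the final smoothing step: the paper mollifies on the Fourier side by convolving with a small bump $\theta$, whereas you truncate in space and project back with $\check\psi$; both yield a test function spectrum inside $\Omega$ and hence a Schwartz approximant in $E^p_K$.
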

\begin{proof} Let $f\in E^p_K.$ Then the compact support
$L$ of $F={\hat f}$ is contained  in $K.$   Using a smooth partition of the
identity  we split $F$ into a finite sum of pieces for which the local
description of $K$ can be used. Then the interior $\Omega$ of $K$ is locally
defined by $x_n > A(x_1,\ldots, x_{n-1})$ while $K$ is defined by $x_n \geq
A(x_1,\ldots, x_{n-1})$ where $A$ is a continuous function. Let $F_\e=F(x_1,
\ldots, x_{n-1}, x_n-\e)$ where $\e>0$ is a small positive number. The support
$L_\e$ of $F_\e$ is $L_\e= L +\e e_n$ where $e_n=(0, \ldots, 0,1)$. Therefore
$L_\e$ is contained in $\Omega$. Moreover
$\|F_\e-F\|_{{\mathcal F}L^p}$ tends to 0 with $\e$. Indeed for every $f\in L^p$
Lebesgue dominated convergence theorem implies that  $\|[\exp(i \e x_n)-1]f\|_p
\rightarrow 0$ as $\e$ tends to 0.    To conclude the
proof it suffices to approximate $F$ by a test function whose compact support 
is contained in $\Omega.$  To reach this goal we  replace $F_\e$ by
the convolution product $F_\e\star \theta$ where  $\theta$ is a smooth bump
function supported by a sufficiently small ball $|x|\leq \eta$ with $\int
\theta=1$.   
\end{proof}

Lemma~\ref{densitat} does not hold for a Riemann integrable compact set $K$.
Here is a counter example in two dimensions. Let $K$ be  the circle centered at
$0$ with radius $1$. The compact set $K$ is Riemann integrable.   We consider
the arc length measure $d\sigma$ on $K$ and its inverse Fourier transform $f$,
that is a Bessel function that decays as $|x|^{-1/2}$ when $x\to\infty$. 
Thus $f$ belongs to $E_K^p$ for $p>4$ but $f$ is not the limit in $L^p$ of a
sequence of test functions in $E_K^p$ because there are no test functions in
$E_K^p$. Any test function $g$ will belong to $L^2(\R^2)$ and $\hat g$ will be
supported in $K$ that has measure 0, thus it will vanish.
\medskip
  
We now follow  the  seminal work of H.\,J.\,Landau \cite{Landau}.

 \begin{definition}

A point set  $\G\subset \R^n$ is a set of stable sampling for $E^p_K$ if there
exists a constant $C$ such that 
\begin{equation}\label{defsamp}
f\in E^p_K \Rightarrow \|f\|_p^p\leq C \sum_{\g\in \G}| f(\g)|^p
\end{equation}
\end{definition}

In other words $\G\subset \R^n$ is a set of stable sampling for $E^p_K$  if
$S^p: E_K^p\mapsto \ell^p(\G)$ is an isomorphism between $E_K^p$ and its image
in $\ell^p(\G)$. If this condition is not satisfied two problems may occur. The
first one is named aliasing. Aliasing means that there exists a function $f\in
E^p_K,\,f\neq0,$ such that $f(\g)=0,\,\g\in\G$. Even if aliasing does not occur
the reconstruction of $f$ from its samples $f(\g),\,\g\in\G,$ is not stable when
\eqref{defsamp} is not satisfied. Let $|E|$ denote the Lebesgue measure of a set
$E$. 

When $\Gamma=\Z^n$, by Lemma~\ref{Poisson} we can ``code'' the samples $f(\g)$
in a periodic function $G\in \mathcal F(\ell^p)$ as follows. Let $F$ be the
Fourier transform of $f$. It has compact support and we can periodize it:
\[
 G(x)=\sum_{k\in \Z^n} F(x-2\pi k).
\]
By Lemma~\ref{Poisson}, the Fourier coefficients $c(k)$ of $G$ satisfy
$c(k)=f(-k)$. Hence, the sampling inequality \eqref{defsamp}, on the Fourier
side, amounts to say that all $F\in \mathcal F(L^p)$ supported in $K$ are
controlled
by its periodized $G\in \mathcal F(\ell^p)$.

\begin{lemme}\label{open}
Let $K\subset \R^n$ be a Riemann integrable compact set. 
\begin{itemize}
\item[(a)] If $\Z^n$ is a set of stable sampling for $E^p_K$ then 
\begin{equation}
\forall k\in \Z^n,\,k\neq 0 \Rightarrow |K\cap (K+2\pi k)|=0. \label{disjoint}
\end{equation}
\item[(b)] Conversely if the sets $K+2\pi k,\,k\in \Z^n,$ are pairwise
disjoint, then $\Z^n$ is a set of stable sampling for $E^p_K$.
\end{itemize} 
\end{lemme}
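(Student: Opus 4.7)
My plan is to prove (a) by contraposition, constructing an aliasing counterexample, and to prove (b) by transferring the sampling inequality to the Fourier side and combining Lemmata~\ref{Poisson} and~\ref{product}.

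For (a), I assume $|K\cap(K+2\pi k)|>0$ for some $k\in\Z^n\setminus\{0\}$ and build a nonzero $f\in E^p_K$ that vanishes on $\Z^n$. Setting $A:=K\cap(K-2\pi k)$, Riemann integrability gives $|\partial K|=0$, so the mass of $A$ sits in $\mathrm{int}(K)\cap\mathrm{int}(K-2\pi k)=\mathrm{int}(A)$; in particular $\mathrm{int}(A)$ contains a small open ball $B$, which I further shrink so that $B\cap(B+2\pi k)=\emptyset$. I then pick $\psi\in C_c^{\infty}$ with $\mathrm{supp}\,\psi\subset B$ and $\psi\not\equiv 0$, let $g:=\mathcal{F}^{-1}\psi\in\mathcal{S}$, and define
\[
  f(x)\;:=\;g(x)\bigl(1-e^{2\pi i\, k\cdot x}\bigr).
\]
A direct computation gives $\hat f(\xi)=\psi(\xi)-\psi(\xi-2\pi k)$, which is supported in $A\cup(A+2\pi k)\subset K$ and does not vanish identically, while $f(j)=g(j)(1-e^{2\pi i k\cdot j})=0$ for every $j\in\Z^n$ because $e^{2\pi i k\cdot j}=1$. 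This contradicts~\eqref{defsamp}.

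For (b), I fix $f\in E^p_K$; Lemma~\ref{Plancherel} already places the samples $(f(j))$ in $\ell^p(\Z^n)$. I form the $2\pi\Z^n$-periodic distribution $G:=\sum_{k\in\Z^n}\hat f(\,\cdot\,-2\pi k)$, which is well defined because the translates of the compact support of $\hat f$ form a locally finite family. Lemma~\ref{Poisson} identifies the Fourier coefficients of $G$ with $f(-k)$, so $G\in\mathcal{F}\ell^p$ with $\|G\|_{\mathcal{F}\ell^p}=\bigl(\sum_{j}|f(j)|^p\bigr)^{1/p}$. The disjointness hypothesis shows that $U:=\bigcup_{k\neq 0}(K+2\pi k)$, being a locally finite union of compact sets, is closed and disjoint from $K$, so I may pick $\phi\in C_c^{\infty}$ with $\phi\equiv 1$ on $K$ and $\mathrm{supp}\,\phi\subset\R^n\setminus U$. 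Then $\phi(\xi)\hat f(\xi-2\pi k)=0$ for each $k\neq 0$ while $\phi\hat f=\hat f$, so $\phi\cdot G=\hat f$ as distributions, and Lemma~\ref{product} produces
\[
  \|f\|_p\;=\;\|\phi\cdot G\|_{\mathcal{F}L^p}\;\le\;C\,\|G\|_{\mathcal{F}\ell^p}\;=\;C\,\Bigl(\sum_{j\in\Z^n}|f(j)|^p\Bigr)^{1/p},
\]
which is exactly~\eqref{defsamp}.

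The step I expect to need most care is the distributional identity $\phi\cdot G=\hat f$ when $p>2$, where $\hat f$ is only a tempered distribution; but local finiteness of the periodized sum makes $G$ a well-defined distribution on which multiplication by $\phi\in C_c^{\infty}$ acts term by term, and the disjointness hypothesis collapses the sum to its $k=0$ summand. A subtler secondary point is the essential use of Riemann (and not merely Lebesgue) integrability in (a): without $|\partial K|=0$, the set $A$ could be a fat Cantor-type set with empty interior and no Schwartz aliasing function would be available.
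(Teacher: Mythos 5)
Your proof is correct and follows essentially the same route as the paper's: for (a) you use Riemann integrability to find a ball in the overlap and modulate a test function to create aliasing, and for (b) you periodize $\hat f$, invoke Lemma~\ref{Poisson}, and cut off with a smooth $\phi$ equal to $1$ on $K$ and vanishing on the other translates so that Lemma~\ref{product} gives the sampling inequality. The only cosmetic difference is that the paper takes $\phi$ supported in $K+B(0,\e)$ for small $\e$ while you support it in the complement of $\bigcup_{k\neq 0}(K+2\pi k)$; these are the same cutoff.
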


Observe that the sufficient condition (b) is more demanding than (a) and
Lemma~\ref{open} does not fully answer the problem.

\smallskip

\begin{proof} If \eqref{disjoint} is not satisfied there exists a
$k_0\in \Z^n$
such that the Riemann integrable set $K\cap (K-2\pi k_0)$ has a positive
measure. Therefore this set  contains a small ball $B$. Then $B\subset K$ and
$B+2\pi k_0\subset K$. Let $f$ be any test function whose Fourier transform is
supported by $B$. Then $g(x)=(\exp(2\pi ik_0\cdot x)-1)f(x)$ belongs to $E_K^p$
and vanishes on $\Z^n$. Aliasing occurs. 

\smallskip
We now prove (b). Let $F$ be the Fourier transform of the function $f\in
E^p_K$ and $G$ its periodized version as above. If $\e>0$ is small enough and if
$B(0,\e)$ is the ball centered at 0 with radius $\e,$ the compact set
$K'=K+B(0,\e)$ still satisfies (b).  Let $\phi$ be a test function supported by
$K'$ and such that $\phi(x)=1$ on $K$. This implies $F=\phi G$ and we can apply
Lemma~\ref{product}.
\end{proof}

Here is an example illustrating Lemma~\ref{open}. Let us assume that $K$ is the
disc $|x|\leq r$. If $0<r<\pi$ condition (b) is satisfied and if $r>\pi$
condition (a) does not hold. However Lemma~\ref{open} does not give any answer
if $r=\pi$. To treat this case, we consider the cube $Q=[-\pi, \pi]^n$ to which
Theorem~\ref{theresult} below can be applied. Therefore $\Z^n$ is a set of
stable sampling for $E^p_Q$. Since $K\subset Q,$ $\Z^n$ is a set of stable
sampling for $E^p_K$.
\medskip

When $p=2,$ stable sampling is equivalent to \eqref{disjoint}.  The goal of this
note is to show that this property does not suffice when  $p\neq 2$. 

\begin{definition}

We say that  $\G\subset \R^n$ is a set of  stable interpolation 
if every  sequence $a(\g)\in\ell^p(\G)$  can be interpolated by a function $f$
in $E^p_K$. 
\end{definition}
It means that there exists $f\in E^p_K$ such that $f(\g)=a(\g),\,\g\in\G$. 
\begin{lemme}\label{openinterp}
Let $K\subset \R^n$ be a Riemann integrable compact set. 
\begin{itemize}
\item[(a)] If $\Z^n$ is a set of stable interpolation for $E^p_K$ then 
\begin{equation}
\bigcup_{k\in \Z^n}(K+2\pi k)=\R^n.
\end{equation}
\item[(b)] Conversely if $\Omega$ is the interior of $K$ and if 
\begin{equation}
\bigcup_{k\in \Z^n}(\Omega+2\pi k)=\R^n
\end{equation}
then $\Z^n$ is a set of stable interpolation for $E^p_K$.
\end{itemize} 
\end{lemme}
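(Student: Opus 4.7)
The plan is to reformulate everything on the Fourier side. By Lemma~\ref{Poisson}, the samples $f(-k)$ are exactly the Fourier coefficients of the $2\pi\Z^n$-periodization $G(x) = \sum_{j\in\Z^n} F(x-2\pi j)$ of $F = \hat f$, so interpolating $a \in \ell^p(\Z^n)$ by some $f \in E^p_K$ is equivalent to producing $F \in {\mathcal F}L^p$ with $\mbox{supp}\, F \subset K$ whose periodization is the periodic distribution $G \in {\mathcal F}\ell^p$ determined by the Fourier coefficients $a(-k)$.

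For (a), suppose $\bigcup_k(K+2\pi k) \neq \R^n$. Since $K$ is compact, this union is locally a finite union of closed sets, hence closed, so its complement contains a small open ball $B$ whose translates $B+2\pi k$ are pairwise disjoint. I would take a non-zero test function $\phi$ supported in $B$ and attempt to interpolate $a(k)=\hat\phi(-k)$, which indeed lies in $\ell^p$. The reformulation above forces the periodization of the corresponding $F$ to coincide with a non-zero multiple of $\sum_j \phi(\cdot-2\pi j)$, which on $B$ equals $\phi\not\equiv 0$ by the disjointness. But the periodization of any $F$ supported in $K$ must vanish on $B$ because $B\cap (K+2\pi k)=\emptyset$ for all $k$; this contradiction rules out interpolation.

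For (b), the key step is to build a smooth periodic partition of unity supported in $\Omega$. From $\bigcup_j(\Omega+2\pi j)=\R^n$ and compactness of the cube $[-\pi,\pi]^n$, finitely many translates $\Omega+2\pi j_1,\dots,\Omega+2\pi j_N$ cover this fundamental domain; pulling back by the corresponding $-2\pi j_i$ and smoothing, I obtain a non-negative test function $\psi_0$ with compact support in $\Omega$ whose periodization $P_0 = \sum_j \psi_0(\cdot-2\pi j)$ is smooth, $2\pi\Z^n$-periodic, and strictly positive everywhere. Then $\psi = \psi_0/P_0$ is again a test function with compact support in $\Omega$ and satisfies $\sum_j \psi(\cdot - 2\pi j) \equiv 1$. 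Given $a \in \ell^p(\Z^n)$, set $F=\psi G$ where $G\in{\mathcal F}\ell^p$ has Fourier coefficients $a(-k)$; Lemma~\ref{product} gives $F\in{\mathcal F}L^p$ with $\|F\|_{{\mathcal F}L^p}\le C\|a\|_{\ell^p}$, while $\mbox{supp}\,F\subset \Omega\subset K$ and $\sum_j F(\cdot-2\pi j)=G\sum_j\psi(\cdot-2\pi j)=G$, so the inverse Fourier transform $f$ of $F$ lies in $E^p_K$ and stably interpolates $a$. The only non-trivial technical point is the construction of $\psi$; here the strict positivity of $P_0$, which is exactly where the covering hypothesis on $\Omega$ enters, is the essential ingredient.
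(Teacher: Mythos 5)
Your proof is correct, and it supplies exactly the argument the paper omits: the paper gives no proof of this lemma, stating only that it is ``similar to the one used in Lemma~\ref{open}'' and leaving it to the reader, and your argument is precisely that analogue --- part (a) dualizes the aliasing construction through the Poisson/periodization reformulation, and part (b) replaces the single cutoff $\phi$ of Lemma~\ref{open}(b) by a $2\pi\Z^n$-periodic partition of unity $\psi$ compactly supported in $\Omega$ before invoking Lemma~\ref{product}. The partition-of-unity step, with $P_0>0$ coming from the covering hypothesis, is the one genuine adaptation required, and you have handled it correctly.
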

Here also the sufficient condition (b) is more demanding than (a). 
The proof is similar to the one used in Lemma~\ref{open}. It is left to the
reader. 

\medskip

We want to know when $ S^p: E^p_K \ra \ell^p(\Z^n)$ is an isomorphism. If it is
the case, Lemma~\ref{open} and \ref{openinterp} imply that the translated sets
$K+2\pi k,\,k\in \Z^n,$ are a partition of $\R^n$ up to sets of measure 0. We
then say that $K$ is a {\it fundamental domain} for $\Z^n$. But the converse
implication is not true.  The fact that $K$ is a {\it fundamental domain} for
$\Z^n$ does not imply that the operator $S^p$ is an isomorphism between  $E^p_K$
and $\ell^p(\Z^n)$ when $p\neq 2$. This will be proved in the next section.

\section{Our Result}

A Borel function
$m(x)$ is a multiplier of 
${\mathcal F}L^p$ if we have 
\begin{equation}
F(x)\in {\mathcal F}L^p \Rightarrow m(x)F(x)\in {\mathcal
F}L^p 
\end{equation}
and if a constant $C$ exists such that 
\begin{equation}
\|m(x)F(x)\|_{{\mathcal F}L^p}\leq C\|F(x)\|_{{\mathcal F}L^p}
\label{multiplier}
\end{equation} 
This does not make any sense if $2 < p$ since $F(x)$ may be a distribution.
This issue is settled by the following remarks. It suffices to prove
\eqref{multiplier} when
$f\in L^2\cap L^p$ and a density argument yields the general case. Moreover if
$m$ is a multiplier of ${\mathcal F}L^p$ then $m$ is also a multiplier of
${\mathcal F}L^q$ when $1/p+1/q=1$. This reduces the case $p>2$ to $1<q<2$.
\medskip

The following lemma will be seminal in the proof of Theorem~\ref{theresult}.
\begin{lemme}\label{seminal}
Let $m\in L^{\infty}(\R^n)$ be a compactly supported function. Then the
following two properties are equivalent
\begin{itemize}
\item[(a)]$m$ is a multiplier of ${\mathcal F}L^p$ 
\item[(b)] $m$ maps  ${\mathcal F}\ell^p$ into ${\mathcal F}L^p$
\end{itemize} 
\end{lemme}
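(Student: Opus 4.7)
\medskip
\noindent\textbf{Proof plan.} I would prove the two directions separately. For (a)$\Rightarrow$(b): given $G\in\mathcal{F}\ell^p$, pick a Schwartz cutoff $\phi$ with $\phi\equiv 1$ on $\operatorname{supp}(m)$ and compact support; Lemma~\ref{product} yields $\phi G\in\mathcal{F}L^p$, and since $m\phi=m$, hypothesis (a) gives $mG=m(\phi G)\in\mathcal{F}L^p$, with a bound in terms of $\|G\|_{\mathcal{F}\ell^p}$.

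For (b)$\Rightarrow$(a), the closed graph theorem first upgrades (b) to a bound $\|mG\|_{\mathcal{F}L^p}\le C\|G\|_{\mathcal{F}\ell^p}$. I would handle first the clean core case $\operatorname{supp}(m)\subset(-\pi,\pi)^n$. Given $F=\hat f$ with $f\in L^p\cap L^2$ (Schwartz, by density), pick a Schwartz cutoff $\phi$ with $\phi\equiv 1$ on $\operatorname{supp}(m)$ and $\operatorname{supp}(\phi)\subset(-\pi,\pi)^n$, and set $U:=\phi F=\hat v$ with $v:=\check\phi*f\in L^p$ of norm $\le\|\check\phi\|_1\|f\|_p$. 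Then periodize: $G(x):=\sum_{k\in\Z^n}U(x-2\pi k)$ is $2\pi$-periodic, equals $U$ on $(-\pi,\pi)^n$, and hence on $\operatorname{supp}(m)$, so that $mG=mU=mF$. By Poisson (Lemma~\ref{Poisson}) the Fourier coefficients of $G$ are $c(k)=(2\pi)^{-n}\hat U(k)=v(-k)$; since $\hat v=U$ is compactly supported, Plancherel--Polya (Lemma~\ref{Plancherel}) bounds $\|(c(k))\|_{\ell^p}\le C_1\|v\|_p\le C_2\|F\|_{\mathcal{F}L^p}$. Applying (b) to $G$ then yields $\|mF\|_{\mathcal{F}L^p}=\|mG\|_{\mathcal{F}L^p}\le C\|G\|_{\mathcal{F}\ell^p}\le CC_2\|F\|_{\mathcal{F}L^p}$, which is the multiplier inequality.

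The main technical obstacle is removing the auxiliary hypothesis $\operatorname{supp}(m)\subset(-\pi,\pi)^n$. For this I would use a finite smooth partition of unity $\{\phi_j\}$ on $\operatorname{supp}(m)$ so that each $\phi_j m$ is supported in a cube of side less than $2\pi$. Each $\phi_j m$ still satisfies (b), because for every $2\pi$-periodic $G$ with $\ell^p$ Fourier coefficients, $(\phi_j m)G=\phi_j(mG)$ is the product of a Schwartz function with an element of $\mathcal{F}L^p$, and this stays in $\mathcal{F}L^p$ by Young's inequality on the physical side. A translation of coordinates then moves the support of each piece into $(-\pi,\pi)^n$, and (b) survives translation: translating $m$ by $c$ amounts to replacing the periodic input $G$ by $G(\cdot-c)$, which has the same $\ell^p$-norm of Fourier coefficients (they are merely modulated), while the $\mathcal{F}L^p$-norm is translation invariant. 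Applying the small-support argument to each translated piece shows it is a multiplier; translating back and summing finally recovers (a) for $m$.
\medskip
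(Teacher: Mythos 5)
Your proof is correct, and it runs on the same engine as the paper's: for (a)$\Rightarrow$(b) the argument is identical (cutoff $\phi$ plus Lemma~\ref{product}), and for (b)$\Rightarrow$(a) both proofs localize to a region of diameter less than $2\pi$, periodize, invoke Lemmata~\ref{Poisson} and~\ref{Plancherel} to land in ${\mathcal F}\ell^p$, and then apply hypothesis (b). The one structural difference is \emph{what} gets localized. The paper keeps $m$ intact, splits $\phi F=\sum_j F_j$ with each $F_j$ supported in a unit cube $Q_j$, applies (b) to the periodization $G_j$ of $F_j$, and then strips away the spurious folded terms with a cutoff $\chi_j$ equal to $1$ on $Q_j$ and vanishing on the translates $Q_j+2\pi k$, so that $mF_j=\chi_j\, mG_j$. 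You instead split the multiplier, $m=\sum_j \phi_j m$, and transport each piece into $(-\pi,\pi)^n$ by translation; this obliges you to verify that the pieces $\phi_j m$ inherit property (b) (which you do via Lemma~\ref{product}/Young) and that (b) is translation invariant (which you do via modulation of the Fourier coefficients) --- both checks are carried out correctly. The paper's variant is slightly shorter because the full $m$ is fed to hypothesis (b) directly and no stability of (b) under localization or translation needs to be established; your variant avoids the final cutoff step and makes the identity $mG=mF$ on the fundamental domain completely explicit. Either way the argument is complete.
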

\begin{proof}
For proving (a)$\Rightarrow$(b) let us denote by $\phi$ a smooth and compactly
supported function such that $\phi=1$ on a neighborhood of the compact support
of $m$.   If $G\in {\mathcal F}\ell^p$ we have $\phi G \in {\mathcal F}L^p$ by
Lemma~\ref{product} and $mG=m\phi G\in {\mathcal F}L^p$ since $m$ is a
multiplier of ${\mathcal F}L^p$. We now prove (b)$\Rightarrow$(a). If  $F\in
{\mathcal F}L^p$ we have $mF=m\phi F$ as above. One uses a smooth partition of
the identity to decompose $\phi F$ into a finite sum $F=\sum_1^m F_j$ where 
$F_j\in {\mathcal F}L^p$ and where the support of each $F_j$  is contained in a
cube $Q_j$ centered at $x_j$ with side length 1. It suffices to show that
$mF_j\in {\mathcal F}L^p$  to conclude. We consider $G_j(x)=\sum_{k\in
\Z^n}F_j(x-2\pi k)$ and Lemmata~\ref{Plancherel} and \ref{Poisson} yield $G_j\in
{\mathcal F}\ell^p$. Since $m$ maps ${\mathcal F}\ell^p$ into ${\mathcal F}L^p$
we have $mG_j\in {\mathcal F}L^p$. Let $\chi_j$ be a test function such that
$\chi_j(x)=1$ on $Q_j$ and $\chi_j(x)=0$ on every $Q_j+2k\pi,\,k\neq 0$. Then
$mF_j=\chi_j mG_j\in {\mathcal F}L^p$. 
\end{proof}

\begin{theoreme}\label{theresult}

Let $K$ be  a regular compact set. Assuming that $K$ is fundamental domain for
$2\pi\Z^n$ the following four properties are equivalent ones 
\begin{itemize}
  \item  [(a)] The indicator function of $K$ is a multiplier of ${\mathcal
F}L^p$.
   \item [(b)] The lattice $\Z^n$ is a set of stable sampling for $E^p_K$. 
   \item [(c)] The lattice $\Z^n$ is a set of stable interpolation for $E^p_K. $
   \item [(d)] The operator $S^p\,: E^p_K\mapsto \ell^p(\Z^n)$ is an
isomorphism. 
\end{itemize}
\end{theoreme}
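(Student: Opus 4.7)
The key tool is Lemma~\ref{seminal}, which recasts (a) as the boundedness of multiplication by $\mathbf{1}_K$ from $\mathcal{F}\ell^p$ to $\mathcal{F}L^p$. Combined with the Poisson formula (Lemma~\ref{Poisson}) and the fundamental-domain property of $K$, this sets up a dictionary: for $f\in E^p_K$, the periodization $G$ of $\hat f$ belongs to $\mathcal{F}\ell^p$ with Fourier coefficients proportional to $f(k)$, and since the translates $K+2\pi k$ tile $\R^n$ modulo a null set, one has $\hat f = \mathbf{1}_K G$ on the interior $K^\circ$.

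I would first prove (a)$\Leftrightarrow$(d) using this dictionary in both directions. Assuming (a), for any $c\in\ell^p(\Z^n)$ form $G_c = \sum_k c(k) e^{-ik\cdot x}$ and let $F := \mathbf{1}_K G_c$; by Lemma~\ref{seminal}, $F\in\mathcal{F}L^p$, its inverse Fourier $f$ lies in $E^p_K$, and $f(k)=c(k)$ with $\|f\|_p\leq C\|c\|_{\ell^p}$. Symmetrically, for any $f\in E^p_K$ the identity $\hat f = \mathbf{1}_K G$ (with $G$ the periodization of $\hat f$) combined with Lemma~\ref{seminal} yields $\|f\|_p \leq C\|(f(k))\|_{\ell^p}$. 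Hence $S^p$ is an isomorphism, i.e.\ (d). The converse (d)$\Rightarrow$(a) reverses the argument: for each $c\in\ell^p$, (d) produces an interpolant $f\in E^p_K$ whose Fourier transform equals $\mathbf{1}_K G_c$, giving the multiplier bound required by Lemma~\ref{seminal}. Trivially (d) implies both (b) and (c).

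To close the equivalence I would show (b)$\Rightarrow$(a) and (c)$\Rightarrow$(a) by density. Testing on finitely supported $c\in\ell^p$ (dense in $\ell^p$): $G_c$ is a trigonometric polynomial and $\mathbf{1}_K G_c$ is a bounded compactly supported function, so its inverse Fourier $f$ lies in $L^2\cap L^\infty$, hence in $L^p$ at least for $p\geq 2$. In the sampling case, applying (b) directly to this $f$ yields $\|f\|_p\leq C\|c\|_{\ell^p}$, i.e.\ the multiplier bound $\|\mathbf{1}_K G_c\|_{\mathcal{F}L^p}\leq C\|G_c\|_{\mathcal{F}\ell^p}$, which extends to all of $\mathcal{F}\ell^p$. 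In the interpolation case, open mapping produces an interpolant $\tilde f\in E^p_K$ with $\tilde f(k)=c(k)$ and $\|\tilde f\|_p\leq C\|c\|_{\ell^p}$; identifying $\tilde f$ with $f$ gives the same bound. The case $p<2$ follows by multiplier duality $\mathcal{F}L^p\leftrightarrow\mathcal{F}L^q$. The main obstacle here is the identification $\tilde f = f$ when $p>2$: $\hat{\tilde f}$ is then a distribution that could a priori differ from $\mathbf{1}_K G_c$ on the measure-zero boundary $\partial K$; Lemma~\ref{densitat} should be used to approximate on the Fourier side by test functions supported strictly inside $K^\circ$, where the identification is unambiguous, and then to propagate the bound through the limit using the stability from (b) or (c) and the regularity of $K$.
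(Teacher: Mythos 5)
Your proposal follows essentially the same route as the paper: Lemma~\ref{seminal} reduces (a) to the boundedness of multiplication by $\chi_K$ from ${\mathcal F}\ell^p$ to ${\mathcal F}L^p$, and the Poisson formula together with the tiling property of $K$ identifies this bound with the sampling and interpolation inequalities, which is exactly the paper's argument. You are in fact somewhat more explicit than the paper about the delicate points (whether $(\chi_K G_c)^\vee$ actually lies in $L^p$, and the identification of distributional Fourier transforms when $p>2$), which the paper handles only by a brief appeal to Lemma~\ref{densitat} and density.
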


\begin{proof}

Let us show that (a) implies  (b). For $f\in E_k^p$ we consider $F$ and $G$ as
above. If $K$ is fundamental domain for $\Z^n$, then formally $F=\chi_K G$. A
priori $G$ could be a distribution if $p>2$, so we need to assume that $f\in
\mathcal{S}\cap E_k$ which we may by Lemma~\ref{densitat}. The property of
stable sampling is first proved if $f\in {\mathcal S}\cap E^p_K$ and extended by
continuity to the general case. Hence the sampling inequality \eqref{defsamp}
amounts to
\begin{equation}\label{alternatiu}
\|\chi_K G\|_{\mathcal FL^p}\leq \|G\|_{\mathcal F \ell^p},\qquad f\in E^p_K.
\end{equation}
This immediately shows, by Lemma~\ref{seminal} that (a) implies (b). The
converse is also true because we can start from $G\in \mathcal F(\ell^p)$ and
take $F=\chi_K G$ and $f$ the function that has $F$
as its Fourier transform. The assumption
(b) means

\begin{equation}
 \|F\|_{{\mathcal F}L^p} \leq C \|f(k)\|_{\ell^p}\label{3b}
\end{equation}
which is the same as
\begin{equation} 
 \|F\|_{{\mathcal F}L^p} \leq C\|G\|_{{\mathcal F}\ell^p}. 
\end{equation}

\medskip

%
%

The equivalence between (a) and (c) follows just by looking to the situation in
terms of the sequence of Fourier coefficients of $G$, which is an arbitrary
sequence in $\ell^p$.

The equivalence between (a) and (d) obviously follows from the preceding steps.
\end{proof}
\section{An example}
\label{sect3}
\subsection{Fefferman's theorem and stable sampling}

Fefferman's theorem says that in any dimension $n\geq 2$ the indicator function
$\chi_B$ of a ball $B$ is not a multiplier of ${\mathcal F}L^p$ when $p\neq 2$. 
Moreover the proof of Fefferman's theorem or an elementary reasoning shows that
the result is local. If $\phi$ is any function which does not vanish identically
on $\partial B$ then $\phi(x)\chi_B(x)$ is not a multiplier of ${\mathcal F}L^p$
if $p\neq 2$. This paves the way to our example.

\medskip

The compact set  $K\subset \R^2$ is defined as follows. We start with   the 
square $Q=\{0 \leq  x_1 , x_2 \leq  2\pi\}$ and we call  $D_1$ (resp. $D_2$) the
closed discs centered at  $x_1 = (\pi, 0)$ (resp.   $x_2 = (\pi, 2\pi)$) with
radius $\pi$. Let $K_1 = Q \cup  D_1$ and  $K = K_1 \setminus  D_2$. It is
trivial to prove that $K$ is a fundamental domain for $2\pi \Z^2$ as in the
picture:
\bigskip
\begin{center}
\setlength{\unitlength}{0.00087489in}
\begingroup\makeatletter\ifx\SetFigFont\undefined
\def\x#1#2#3#4#5#6#7\relax{\def\x{#1#2#3#4#5#6}}%
\expandafter\x\fmtname xxxxxx\relax \def\y{splain}%
\ifx\x\y   
\gdef\SetFigFont#1#2#3{%
  \ifnum #1<17\tiny\else \ifnum #1<20\small\else
  \ifnum #1<24\normalsize\else \ifnum #1<29\large\else
  \ifnum #1<34\Large\else \ifnum #1<41\LARGE\else
     \huge\fi\fi\fi\fi\fi\fi
  \csname #3\endcsname}%
\else
\gdef\SetFigFont#1#2#3{\begingroup
  \count@#1\relax \ifnum 25<\count@\count@25\fi
  \def\x{\endgroup\@setsize\SetFigFont{#2pt}}%
  \expandafter\x
    \csname \romannumeral\the\count@ pt\expandafter\endcsname
    \csname @\romannumeral\the\count@ pt\endcsname
  \csname #3\endcsname}%
\fi
\fi\endgroup
{\renewcommand{\dashlinestretch}{30}
\begin{picture}(2724,2511)(0,-10)
\drawline(912.000,687.000)(905.863,761.068)(887.618,833.115)
	(857.763,901.176)(817.113,963.396)(766.777,1018.076)
	(708.127,1063.725)(642.763,1099.098)(572.468,1123.230)
	(499.161,1135.463)(424.839,1135.463)(351.532,1123.230)
	(281.237,1099.098)(215.873,1063.725)(157.223,1018.076)
	(106.887,963.396)(66.237,901.176)(36.382,833.115)
	(18.137,761.068)(12.000,687.000)
\drawline(912.000,1362.000)(905.863,1436.068)(887.618,1508.115)
	(857.763,1576.176)(817.113,1638.396)(766.777,1693.076)
	(708.127,1738.725)(642.763,1774.098)(572.468,1798.230)
	(499.161,1810.463)(424.839,1810.463)(351.532,1798.230)
	(281.237,1774.098)(215.873,1738.725)(157.223,1693.076)
	(106.887,1638.396)(66.237,1576.176)(36.382,1508.115)
	(18.137,1436.068)(12.000,1362.000)
\drawline(912.000,2037.000)(905.863,2111.068)(887.618,2183.115)
	(857.763,2251.176)(817.113,2313.396)(766.777,2368.076)
	(708.127,2413.725)(642.763,2449.098)(572.468,2473.230)
	(499.161,2485.463)(424.839,2485.463)(351.532,2473.230)
	(281.237,2449.098)(215.873,2413.725)(157.223,2368.076)
	(106.887,2313.396)(66.237,2251.176)(36.382,2183.115)
	(18.137,2111.068)(12.000,2037.000)
\drawline(1812.000,12.000)(1805.863,86.068)(1787.618,158.115)
	(1757.763,226.176)(1717.113,288.396)(1666.777,343.076)
	(1608.127,388.725)(1542.763,424.098)(1472.468,448.230)
	(1399.161,460.463)(1324.839,460.463)(1251.532,448.230)
	(1181.237,424.098)(1115.873,388.725)(1057.223,343.076)
	(1006.887,288.396)(966.237,226.176)(936.382,158.115)
	(918.137,86.068)(912.000,12.000)
\drawline(1812.000,687.000)(1805.863,761.068)(1787.618,833.115)
	(1757.763,901.176)(1717.113,963.396)(1666.777,1018.076)
	(1608.127,1063.725)(1542.763,1099.098)(1472.468,1123.230)
	(1399.161,1135.463)(1324.839,1135.463)(1251.532,1123.230)
	(1181.237,1099.098)(1115.873,1063.725)(1057.223,1018.076)
	(1006.887,963.396)(966.237,901.176)(936.382,833.115)
	(918.137,761.068)(912.000,687.000)
\drawline(1812.000,1362.000)(1805.863,1436.068)(1787.618,1508.115)
	(1757.763,1576.176)(1717.113,1638.396)(1666.777,1693.076)
	(1608.127,1738.725)(1542.763,1774.098)(1472.468,1798.230)
	(1399.161,1810.463)(1324.839,1810.463)(1251.532,1798.230)
	(1181.237,1774.098)(1115.873,1738.725)(1057.223,1693.076)
	(1006.887,1638.396)(966.237,1576.176)(936.382,1508.115)
	(918.137,1436.068)(912.000,1362.000)
\drawline(1812.000,2037.000)(1805.863,2111.068)(1787.618,2183.115)
	(1757.763,2251.176)(1717.113,2313.396)(1666.777,2368.076)
	(1608.127,2413.725)(1542.763,2449.098)(1472.468,2473.230)
	(1399.161,2485.463)(1324.839,2485.463)(1251.532,2473.230)
	(1181.237,2449.098)(1115.873,2413.725)(1057.223,2368.076)
	(1006.887,2313.396)(966.237,2251.176)(936.382,2183.115)
	(918.137,2111.068)(912.000,2037.000)
\drawline(2712.000,12.000)(2705.863,86.068)(2687.618,158.115)
	(2657.763,226.176)(2617.113,288.396)(2566.777,343.076)
	(2508.127,388.725)(2442.763,424.098)(2372.468,448.230)
	(2299.161,460.463)(2224.839,460.463)(2151.532,448.230)
	(2081.237,424.098)(2015.873,388.725)(1957.223,343.076)
	(1906.887,288.396)(1866.237,226.176)(1836.382,158.115)
	(1818.137,86.068)(1812.000,12.000)
\drawline(2712.000,687.000)(2705.863,761.068)(2687.618,833.115)
	(2657.763,901.176)(2617.113,963.396)(2566.777,1018.076)
	(2508.127,1063.725)(2442.763,1099.098)(2372.468,1123.230)
	(2299.161,1135.463)(2224.839,1135.463)(2151.532,1123.230)
	(2081.237,1099.098)(2015.873,1063.725)(1957.223,1018.076)
	(1906.887,963.396)(1866.237,901.176)(1836.382,833.115)
	(1818.137,761.068)(1812.000,687.000)
\drawline(2712.000,1362.000)(2705.863,1436.068)(2687.618,1508.115)
	(2657.763,1576.176)(2617.113,1638.396)(2566.777,1693.076)
	(2508.127,1738.725)(2442.763,1774.098)(2372.468,1798.230)
	(2299.161,1810.463)(2224.839,1810.463)(2151.532,1798.230)
	(2081.237,1774.098)(2015.873,1738.725)(1957.223,1693.076)
	(1906.887,1638.396)(1866.237,1576.176)(1836.382,1508.115)
	(1818.137,1436.068)(1812.000,1362.000)
\drawline(2712.000,2037.000)(2705.863,2111.068)(2687.618,2183.115)
	(2657.763,2251.176)(2617.113,2313.396)(2566.777,2368.076)
	(2508.127,2413.725)(2442.763,2449.098)(2372.468,2473.230)
	(2299.161,2485.463)(2224.839,2485.463)(2151.532,2473.230)
	(2081.237,2449.098)(2015.873,2413.725)(1957.223,2368.076)
	(1906.887,2313.396)(1866.237,2251.176)(1836.382,2183.115)
	(1818.137,2111.068)(1812.000,2037.000)
\drawline(912.000,12.000)(905.863,86.068)(887.618,158.115)
	(857.763,226.176)(817.113,288.396)(766.777,343.076)
	(708.127,388.725)(642.763,424.098)(572.468,448.230)
	(499.161,460.463)(424.839,460.463)(351.532,448.230)
	(281.237,424.098)(215.873,388.725)(157.223,343.076)
	(106.887,288.396)(66.237,226.176)(36.382,158.115)
	(18.137,86.068)(12.000,12.000)
\drawline(12,12)(12,2037)
\drawline(912,12)(912,2037)
\drawline(1812,12)(1812,2037)
\drawline(2712,12)(2712,2037)
\end{picture}
}

\end{center}

But we know
from Fefferman's theorem that the indicator function  of $K$ is not a Fourier
multiplier of ${\mathcal F}(L^p)$ if $p\neq 2$.  Therefore the functions $f\in
E_K^p$ cannot be sampled on $\Z^2$ if $p\neq 2$.

\section{Irregular sequences and multipliers}

A sequence $\Lambda\subset \R^n$ is a complete interpolating sequence for
$E_K^p$ when for any sequence of values $v_\lambda\in \ell^p(\Lambda)$
there is a \emph{unique} function $f\in E_K^p$ such that
$f(\lambda)=v_\lambda$. It can be seen that complete interpolating sequences
are simultaneously stable sampling and stable interpolating sequences.

We are not assuming now that $\Lambda$ has any structure. In such generality
very little is known. In the case of dimension one, and $K$ being an interval
there is a complete description of the complete interpolating sequences
in $E_I^p$ due to Lyubarskii and Seip \cite{LyuSei97}  that generalizes to any
$p>1$ the description of Pavlov, \cite{Pavlov} when $p=2$. 

Nevertheless it is still possible to see that the existence of complete
interpolating sequences is related to the boundedness of the multiplier at
least in one direction.

\begin{theoreme}
Given any compact $K$, and a complete interpolating sequence
$\Lambda$ for the space $E_K^p$, the function $\chi_K$ is a multiplier for
$\mathcal FL^p$
\end{theoreme}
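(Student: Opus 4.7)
The plan is to verify the criterion in Lemma~\ref{seminal}: since $\chi_K$ is bounded and compactly supported, it is a multiplier of $\mathcal{F}L^p$ iff it maps $\mathcal{F}\ell^p$ into $\mathcal{F}L^p$. I will prove the latter. Writing a generic element of $\mathcal{F}\ell^p$ as $G(\xi)=\sum_{k\in\Z^n}c_ke^{ik\cdot\xi}$ with $(c_k)\in \ell^p(\Z^n)$, the task reduces to showing that the tempered distribution $h := (\chi_K G)^\vee$ is a genuine $L^p$ function with $\|h\|_p\lesssim \|c\|_{\ell^p}$. Because $\chi_K G$ is compactly supported, $h$ is by Paley--Wiener an entire function of exponential type, defined pointwise by $h(\lambda) = \sum_k c_k \phi_K(\lambda+k)$ where $\phi_K:=(\chi_K)^\vee$.

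Since $h$ is spectrally supported in $K$, to prove $h\in L^p$ I will produce $\tilde h\in E_K^p$ with the same pointwise values as $h$ at every $\lambda\in\Lambda$ and controlled $L^p$ norm, then identify $\tilde h=h$ as a distribution. The existence of $\tilde h$ is furnished by the stable interpolation half of the CIS hypothesis, provided the sample sequence $(h(\lambda))_\lambda$ lies in $\ell^p(\Lambda)$ with norm $\lesssim \|c\|_{\ell^p}$; the identification $\tilde h=h$ will follow from the uniqueness property of CIS extended to tempered distributions of exponential type with Fourier support in $K$. The proof of the multiplier bound $\|h\|_p\lesssim \|c\|_{\ell^p}$ then closes via the CIS isomorphism $S\colon E_K^p \to \ell^p(\Lambda)$.

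The heart of the matter is thus the Carleson-type bound on the samples, $\|(h(\lambda))_\lambda\|_{\ell^p(\Lambda)} \lesssim \|c\|_{\ell^p(\Z^n)}$, which I establish by duality. For $(b_\lambda)\in \ell^{p'}(\Lambda)$, the complete interpolating property implies that the dual sampling $S^\ast\colon \ell^{p'}(\Lambda)\to (E_K^p)^\ast$ is an isomorphism, so the functional $g\mapsto \sum b_\lambda g(\lambda)$ on $E_K^p$ extends via Hahn--Banach to a functional on $L^p$ represented by some $h_b\in L^{p'}$ with $\|h_b\|_{p'}\lesssim \|b\|_{\ell^{p'}}$. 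A Parseval computation identifies $\hat h_b|_K$ with the formal series $\sum \bar b_\lambda e^{-i\lambda\cdot\xi}|_K$ as a distribution on $K$. Choosing a Schwartz cut-off $\phi\equiv 1$ on $K$ and invoking Lemma~\ref{product} to put $\phi G$ in $\mathcal{F}L^p$ with inverse transform $\psi\in L^p$ satisfying $\|\psi\|_p\lesssim \|c\|_{\ell^p}$, a distributional Parseval step rewrites
\[
\sum_\lambda h(\lambda)\bar b_\lambda \;=\; (2\pi)^{-n}\int (\phi G)(\xi)\,\overline{\chi_K \hat h_b(\xi)}\,d\xi,
\]
and pairing across the cut-off yields $|\sum_\lambda h(\lambda)\bar b_\lambda|\lesssim \|c\|_{\ell^p}\|b\|_{\ell^{p'}}$, from which the Carleson bound follows by duality.

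The main technical obstacles are: (a) justifying the distributional Parseval step, since $\hat h_b$ is only a tempered distribution and not a function, which requires careful localization with Schwartz cutoffs and reduction by density to finitely-supported sequences $(c_k)$; and (b) carrying out the distribution-level uniqueness identifying $\tilde h$ with $h$, which hinges on the critical-density property of CIS and the Paley--Wiener structure of the bandlimited functions involved.
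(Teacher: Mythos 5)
Your overall strategy --- reduce via Lemma~\ref{seminal} to showing $\chi_K G\in{\mathcal F}L^p$ for every $G\in{\mathcal F}\ell^p$, and try to realize $h=(\chi_K G)^\vee$ as the interpolant of its own samples on $\Lambda$ --- is genuinely different from the paper's, but the two steps you defer as ``technical obstacles'' are in fact where the argument breaks. First, the identification $\tilde h=h$ does not follow from the hypothesis. The uniqueness built into the definition of a complete interpolating sequence lives inside $E_K^p$: two elements \emph{of $E_K^p$} agreeing on $\Lambda$ coincide. You need uniqueness for $h-\tilde h$, which a priori is only a tempered distribution with Fourier support in $K$, and at that level uniqueness genuinely fails: for $n=1$, $K=[-\pi,\pi]$, $\Lambda=\Z$ (a complete interpolating sequence for $E_K^p$, $1<p<\infty$), the bounded function $\sin(\pi x)$ has spectrum $\{\pm\pi\}\subset K$ and vanishes on $\Lambda$ without being zero. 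So ``uniqueness extended to tempered distributions of exponential type'' is precisely what is not available; the only way to invoke the CIS uniqueness is to already know $h\in E_K^p$, which is the conclusion.

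Second, the duality step is circular. Your Parseval identity pairs $\phi G=\hat\psi$ against $\overline{\chi_K\hat h_b}$; to bound that pairing by $C\,\|\psi\|_p\,\|h_b\|_{p'}$ you need $\chi_K\hat h_b\in{\mathcal F}L^{p'}$ with norm controlled by $\|h_b\|_{p'}$, i.e.\ that $\chi_K$ is a multiplier of ${\mathcal F}L^{p'}$ --- equivalent to the statement being proved. Regrouping the other way, $\int\hat h\,\overline{\hat h_b}=(2\pi)^n\int h\,\overline{h_b}$, requires $\|h\|_p\le C\|c\|_{\ell^p}$, again the conclusion; and crude substitutes such as Hausdorff--Young applied to $G$ on $K$ go in the wrong direction for one of the two ranges of $p$. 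The paper sidesteps all of this with a soft transference argument: the composition $T=T_2\circ T_1$ of the sampling map (bounded by Plancherel--Polya) with the interpolation map is a bounded projection of $L^p$ onto $E_K^p$; conjugating by translations, averaging $\tau_{-x}T\tau_x$ over large balls, and extracting a weak operator limit yields a bounded \emph{translation-invariant} projection onto $E_K^p$, which must be convolution against $(\chi_K)^\vee$. That averaging is the idea your proposal is missing: it converts the $\Lambda$-dependent operator into a Fourier multiplier without ever requiring a sample estimate for $(\chi_K G)^\vee$ or a distribution-level uniqueness statement.
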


\begin{proof}
Take a smooth compactly supported function $\phi$ such that $\phi\equiv 1$ in a
neighborhood of $K$. If $\widehat F=\phi$, then for any $f\in E_K^p$ we
have the reproducing formula: 
\begin{equation}
f(z)= f\star F (z)=\int_{\R^n} f(x)F(z-x) dx.
\end{equation}
Consider the following two operators:

\begin{equation}
\begin{split} 
T_1: L^p(\R^n)\to \ell^p(\Lambda)\\ 
f\to \{f\star F(\lambda)\}_{\lambda\in\Lambda}
\end{split}
\end{equation}

Clearly the operator $T_1$ is linear and bounded by the Plancherel-Polya
inequality since the sequence $\Lambda$ is
separated and $f\star F\in E_B^p$, where $B$ is a
ball containing the support of $\phi$.

Now consider the operator:

\begin{equation}
T_2: \ell^p(\Lambda)\to E_K^p
\end{equation}

that to any sequence of values $\{v_\lambda\}\in \ell^p(\Lambda)$ associates
the unique function $f\in E_K^p$ such that $f(\lambda)=v_\lambda$. 
This function exists because we assume that $\Lambda$ is a complete
interpolating sequence. Moreover it defines a bounded linear operator. Thus the
composition operator $T=T_2\circ T_1$ maps $L^p(\R^n)$ to $E^p_K$
linearly and it is bounded. Because of the reproducing property, and the fact
that $\Lambda$ is a uniqueness set for $E_K^p$ it follows that $T$ is a
projection, i.e. $T\circ T =T$. 

We are going to produce now another bounded projection invariant under 
translations. Denote by $\tau_x$ the translation operator by $x$. Then
$T_x=\tau_{-x} T \tau_{x}$ is another projection with the same norm as $T$. 
We average them over a big ball and denote
\begin{equation}
 T_R = \frac 1{|B(0,R)|} \int_{B(0,R)} T_x\, dx.
\end{equation}
The operator $T_R$ is again a projection with norm bounded by the norm of $T$.
Since all the operators $T_R:L^p\to L^p$ are bounded uniformly, by the
Banach-Alouglou
theorem we can extract a sequence
sequence $R_n\to \infty$ such that $T_{R_n}$ converges to
a bounded operator $\widetilde T$ in the weak operator topology, i.e $\langle
f,T_{R_n}(g)\rangle \to \langle f,\widetilde{T}(g)\rangle$ for all $f\in L^q$
and $g\in L^p$. The linear operator $\widetilde T$
is a projection and has bounded norm (all these properties are inherited from
the $T_{R_n}$). 
We will see now that it commutes with the translations.
Indeed if we fix $y\in\R^n$,
\begin{equation}
\begin{aligned}
\tau_y T_R &= \frac 1{|B(0,R)|}\int_{B(0,R)}\tau_{y-x} T \tau_x\, dx=\\
&=\frac 1{|B(0,R)|}\int_{B(y,R)}\tau_{-x} T \tau_{x+y}\, dx=\\
&=\frac 1{|B(0,R)|}\int_{B(0,R)}\tau_{-x} T\tau_{x+y}\, dx + G_R,
\end{aligned}
\end{equation}
where $\|G_R\|=O(1/R)$.

Thus $\|T_{R_n}\tau -\tau T_{R_n}\|\to 0$ for any translate $\tau$, but
since $T_{R_n}\tau-\tau T_{R_n}\to \widetilde{T}\tau -\tau \widetilde{T}$ in
the weak operator topology, then $\widetilde{T}\tau=\tau \widetilde{T}$.

Since $\widetilde{T}$ is a bounded linear operator that commutes with the
translations then it is a convolution operator, i.e. it is given by a Fourier
multiplier against a bounded function.

The fact that it is a projection onto the functions with spectra lying in $K$
implies that $\widetilde T$ is the  multiplier given by $\chi_K$. 
\end{proof}

\begin{corollaire}
For any smooth compact $K\subset\R^n$ there are no complete interpolating
sequences for the space $E_K^p$ for any $p\ne 2$, $1<p<\infty$.
\end{corollaire}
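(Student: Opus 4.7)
The plan is to derive the corollary by contrapositive using the preceding theorem together with the localized version of Fefferman's ball multiplier theorem invoked in Section~\ref{sect3}. Suppose, for contradiction, that $\Lambda$ is a complete interpolating sequence for $E_K^p$ for some $p\neq 2$. The preceding theorem forces $\chi_K$ to be a multiplier of $\mathcal{F}L^p$. The task is then to show that this cannot happen when $K$ is smooth and $p\neq 2$.

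Since $K$ is a smooth compact subset of $\R^n$ with $n\geq 2$, its boundary $\partial K$ must contain at least one point $x_0$ of non-vanishing Gaussian curvature. For example, choosing $x_0 \in \partial K$ of maximal Euclidean norm places $K$ locally inside a ball tangent at $x_0$, so all principal curvatures of $\partial K$ at $x_0$ are bounded below by the curvature of that sphere. Let $\phi \in \mathcal{S}(\R^n)$ be a bump supported in a small neighborhood $V$ of $x_0$ with $\phi(x_0)\neq 0$, chosen small enough that $\partial K \cap V$ is, up to a $C^\infty$ change of variables, a piece of a strictly convex hypersurface arbitrarily close to the tangent sphere at $x_0$. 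If $\chi_K$ were a multiplier of $\mathcal{F}L^p$, then $\phi\chi_K$ would also be a multiplier, because multiplication by a Schwartz function is bounded on $\mathcal{F}L^p$ (its inverse Fourier transform is in $L^1$, so it acts by convolution with an $L^1$ kernel on $L^p$). But Fefferman's theorem in its local form, recalled at the beginning of Section~\ref{sect3}, asserts that $\phi\chi_K$ is \emph{not} a multiplier of $\mathcal{F}L^p$ whenever $\phi$ does not vanish identically on a curved piece of $\partial K$ and $p\neq 2$. This contradicts our assumption.

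The main obstacle is the invocation of the localized Fefferman obstruction: one must verify that the non-multiplier property, originally established for the unit ball, survives a $C^\infty$ change of coordinates and the additional smooth cutoff $\phi$. This reduction—transplanting the Besicovitch/Kakeya construction to a neighborhood of any boundary point with non-vanishing curvature via a local pullback—is standard and is precisely the reasoning the authors have already used in Section~\ref{sect3} for the example. The remaining ingredients, namely that smooth compact sets in $\R^n$ ($n\geq 2$) always admit a boundary point of non-zero curvature, and that Schwartz multipliers act boundedly on $\mathcal{F}L^p$, are routine.
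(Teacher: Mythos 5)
Your proposal is correct and follows exactly the paper's own route: the preceding theorem forces $\chi_K$ to be an $\mathcal{F}L^p$ multiplier, and the localized form of Fefferman's theorem rules this out because the smooth boundary contains a point of positive curvature. You merely spell out details (the point of maximal norm, the Schwartz cutoff) that the paper leaves implicit.
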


\begin{proof}
If such $\Lambda$ existed, $\chi_K$ will be a multiplier in $\mathcal F(L^p)$
but Fefferman theorem states that it cannot be bounded  because $K$ has points
with positive curvature. 
\end{proof}

\end{document}